\newtheorem{theorem}{Theorem}[section]
\newtheorem{lemma}[theorem]{Lemma}
\newtheorem{corollary}[theorem]{Corollary}
\newtheorem{remark}[theorem]{Remark} 
\newtheorem{example}[theorem]{Example}
\begin{document}

\title{
\bf Formula for Dupin Cyclidic Cube and Miquel Point}

\author{
Jean Michel Menjanahary and
Rimvydas Krasauskas \\
{\small \em Institute of Computer Science, Vilnius University, Lithuania}
}

\date{\empty}
\maketitle

\begin{abstract}
Dupin cyclides are surfaces conformally equivalent to a torus, a circular cone, or a cylinder. Their patches admit rational bilinear quaternionic B\'ezier parametrizations and are used in geometric design and architecture.
Dupin cyclidic cubes are a natural trivariate generalization of Dupin cyclide patches. In this article, we derive explicit formulas for control points and weights of rational trilinear quaternionic B\'ezier parametrizations of Dupin cyclidic cubes and relate them with the classical construction of the Miquel point.
\end{abstract}

{\em Keywords:} {\small \em Dupin cyclidic cube, quaternionic-Bézier formula, Miquel point}

\def\R{\mathbb R}       
\def\RR{\mathbb R}       
\def\C{\mathbb C}       
\def\D{{\mathbb D}}
\def\H{\mathbb H}       
\def\HH{\mathbb H}       
\def\Sp{\mathbb{S}}              
\def\P{\mathbb{P}}       
\def\Ss{S}              
\def\S{\mathcal{S}} 
\def\Cr{\mathop{\mathrm{cr}}}

\def\re{\mathop{\mathrm{Re}}}
\def\im{\mathop{\mathrm{Im}}}
\def\rank{\mathop{\mathrm{rank}}}
\def\inv{\mathop{\mathrm{Inv}}}
\def\Qdr{{\cal Q}}
\newcommand{\sing}{\mathop{\mathrm{Sing}}}


\def\ic{\mathrm{i}} 
\def\ii{\mathbf{i}}
\def\jj{\mathbf{j}}
\def\kk{\mathbf{k}}

\def\B{B\'ezier }
\def\M{M\"obius }

\section{Introduction}

Dupin cyclides, i.e., surfaces conformally equivalent to a torus, circular cone, or cylinder, have versatile applications in geometric design and architecture. They have circular curvature lines. Their patches bounded by 4 circles, which are curvature lines, allow rational bilinear quaternionic B\'ezier (QB) parametrizations and offer significant advantages in modeling complex shapes.
Building on the foundational concepts of Dupin cyclide principal patches, Dupin cyclidic (DC) cubes represent a natural trivariate extension. We can effectively model these higher-dimensional structures by employing rational trilinear QB parametrizations. 

This paper presents explicit formulas for the control points and weights necessary for the rational trilinear QB parametrizations of DC cubes. Additionally, we establish a connection between these parametrizations and the classical Miquel point construction. The quaternionic representations of DC cubes were recently used in \cite{menjanahary2024cl}, and the present paper supports these results. Using geometric constructive derivation, a preliminary QB formula for DC cubes was derived in \cite{zube2013quat}.

\section{Quaternions and Inversions}\label{s:1}
The algebra of quaternions $\HH$ is the real non-commutative algebra generated by $\{\ii, \jj,\kk\}$ satisfying the product rules $\ii^2\!=\!\jj^2\!=\!\kk^2\!=\!\ii\jj\kk\!=\!-1$. It is a 4-dimensional real vector space with the standard basis 
$\{1,\ii,\jj,\kk\}$.
For a quaternion $q$ written in the algebraic form 
$q = r + x \ii + y \jj + z \kk$, 
define the real part $\re(q)=r$, the imaginary part 
$\im(q)= q-\re(q)$,
the conjugate $\Bar{q}=\re(q)-\im(q)$, and the norm $|q|=\sqrt{q\Bar{q}}$. 
The algebra $\HH$ is also a division ring, meaning that every non-zero element is invertible. If $q\neq0$, its inverse is $q^{-1}=\Bar{q}/|q|^2$.
The properties of conjugation and norm are the same as those of complex numbers, but care must be taken to account for non-commutativity when permuting product elements; e.g., $\overline{qp}=\bar{p}\bar{q}$. We refer to \cite{zube2013quat, zube2015rep} for further details about quaternionic products and their relation to the standard dot and cross products in $\RR^3$. The Euclidean space $\RR^3$ here is identified with the space of imaginary quaternions $\im\!\HH=\{q\in \HH\mid \re(q)=0\}$.

Since we are dealing with objects composed of circles and lines in $\RR^3$, it is natural to use inversion transformations that preserve the set of circles and lines and angles between crossing curves, known as conformal transformations.
In the quaternionic framework, an inversion $\inv_q^r$ with respect to a sphere of center $q\in \im\!\HH$ and radius $r>0$ can be written explicitly as 
\[\text{Inv}_q^{r}(p)=q-r^2(p-q)^{-1}\in \im\!\HH\]
for all $p\in \im\!\HH$.
On the compactified space $\widehat{\RR}^3=\RR^3\cup \{\infty\}$, which is identified with $\im\widehat{\HH}=\im\!\HH \cup \{\infty\}$, $\inv_q^r$ is an involution transformation mapping the center $q$ to $\infty$ and vice versa.
The group generated by inversions is called the group of \M transformations. Euclidean similarities are particular cases of \M transformations; see \cite[Sec.~2]{zube2015rep} for more details.

\section{Formula for Circles and Dupin Cyclides}
Define the quaternionic fraction 
\[
\frac{U}{W} =
\begin{cases}
  U W^{-1} & \text{if } W \neq 0, \\
  \infty & \text{if } W = 0.
\end{cases}
\]
A rational quaternionic B\'ezier (QB) curve $C(t)$ of degree $n$ is defined by the following data:
\begin{itemize}
\item Control points $p_i\in \HH$, $i=0,\ldots, n$;
\item Weights $w_i\in \HH$, $i=0,\ldots, n$;
\end{itemize}
such that 
\[
C(t) = \frac{\sum_{i=0}^{n}p_iw_iB_i^n(t)}{\sum_{i=0}^{n}w_iB_i^n(t)}, \quad t\in [0,1],
\]
where $B_i^n(t) ={n \choose i}(1-t)^{n-i}t^i$ are Bernstein basis polynomials. 
The matrix
\begin{align*}
\begin{pmatrix}
u_i\\
w_i
\end{pmatrix}_{i=0\dots n}
=
\begin{pmatrix}
p_iw_i\\
w_i
\end{pmatrix}_{i=0\dots n}
\end{align*}
is called the homogeneous representation of the curve $C(t)$.
We will be interested in the linear case ($n=1$). 

\begin{remark}
\label{rem:inv-trans}
QB formulas are preserved by inversions in the following sense: an inversion $\inv_q^r$ maps a QB formula with homogeneous control points $(u_i,w_i)$ to a QB formula with homogeneous control points $(u_i',w_i')$ such that
 \begin{equation}
u_i' =qu_i-(r^2+q^2)w_i, \quad w_i'=u_i-qw_i.
 \end{equation}
\end{remark}

To model curves in $\widehat{\RR}^3$, let's standardize the condition for an arbitrary pair $(U,W)$ of quaternions to define a point $UW^{-1}\in \widehat{\RR}^3$.
By identifying $\HH^2$ with $\RR^8$, define the quadratic form $\Ss$ in $\RR^8$ by
\begin{equation}\label{Study}
\Ss(u,w) = \frac{u \bar w + w \bar u}{2}, \quad (u,w)\in \HH^2.    
\end{equation}
The quadric in $\RR P^7$ (real projectivization  $\HH^2$)  defined by $\Ss(u,w) = 0$ is called the \emph{Study quadric}, which we denote by $\Ss$ as well. Let $\pi: \HH^2\rightarrow \HH\cup \{\infty\}$ such that $\pi(u,w)=\frac{u}{w}$. The following lemma is straightforward:

\begin{lemma}
$\pi(u,w)\in \widehat{\RR}^3$ if and only if $(u,w)\in S$.
\end{lemma}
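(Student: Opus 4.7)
The plan is to split into the two cases distinguished by the definition of $\pi$, namely $w=0$ and $w\neq 0$, and show the equivalence in each case directly from the definition of $\Ss$.

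First I would dispose of the case $w=0$: by the convention defining the quaternionic fraction, $\pi(u,0)=\infty\in\widehat{\R}^3$, and substituting $w=0$ into \eqref{Study} immediately gives $\Ss(u,0)=0$, so both conditions are satisfied (and the argument does not depend on $u$, which is consistent with $(u,w)=(0,0)$ being excluded as a projective point).

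The substantive case is $w\neq 0$, where $\pi(u,w)=uw^{-1}\in\HH$. Here the key computation is to use the explicit inverse $w^{-1}=\bar w/|w|^2$ to rewrite $uw^{-1}=u\bar w/|w|^2$, and then observe that for any quaternion $q$ one has $\re(q)=(q+\bar q)/2$. Applied to $q=u\bar w$, and using $\overline{u\bar w}=w\bar u$, this yields
\[
\re(uw^{-1})=\frac{\re(u\bar w)}{|w|^2}=\frac{u\bar w+w\bar u}{2|w|^2}=\frac{\Ss(u,w)}{|w|^2}.
\]
Since $\pi(u,w)\in\widehat{\R}^3$ in this case is equivalent to $uw^{-1}\in\im\HH$, i.e.\ $\re(uw^{-1})=0$, the displayed identity shows this is equivalent to $\Ss(u,w)=0$, finishing the proof.

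There is no real obstacle here; the statement is essentially a bookkeeping lemma, and the only subtlety is the $w=0$ case, which is handled cleanly by the convention fixing $\pi(u,0)=\infty$ together with the fact that $\Ss$ vanishes automatically when one of its arguments is zero.
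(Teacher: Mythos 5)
Your proof is correct and is exactly the ``straightforward'' computation the paper omits: the identity $\Ss(u,w)=\re(u\bar w)$ combined with $uw^{-1}=u\bar w/|w|^2$ handles the case $w\neq 0$, and the convention $\pi(u,0)=\infty$ together with $\Ss(u,0)=0$ handles the rest. Nothing is missing.
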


\noindent
To design a QB curve in $\widehat{\RR}^3$, we follow the following routines:
\begin{itemize}
\item The control points $p_i$ are contained in $\widehat{\RR}^3$;
\item The homogeneous control points $(p_iw_i,w_i)$ are contained in the Study quadric;
\item Consider the pair $(U(t), W(t))$ as a standard B\'ezier curve (with control points $(p_iw_i,w_i)$) with real weights in the Study quadric. Then, apply the projection $\pi$ to get a curve in $\widehat{\RR}^3$; see the diagram below.
\end{itemize}

\[
\begin{tikzcd}
{[0,1]}\arrow[r, "\iota"] \arrow[dr, "\pi \circ \iota"] & {\Ss\subset \RR\mathbb P^7}  \arrow[d, "\pi"'] &
{t} \arrow[r, maps to,"\iota"] \arrow[dr, maps to,"\pi \circ \iota"] & {\left(U(t), W(t)\right)}  \arrow[d, maps to,"\pi"']\\
& \RR^3\cup \{\infty\} &
& \frac{U(t)}{W(t)}
\end{tikzcd}
\]

\begin{example}\rm 
 A circular arc with endpoints $p_0$, $p_1$ and a tangent vector $v_1$ at $p_0$ can be parametrized using the QB formula
 \[C(t) = \frac{\sum_{i=0}^{1}u_iB_i^1(t)}{\sum_{i=0}^{1}w_iB_i^1(t)},\]
 where
 \begin{equation}
  \begin{pmatrix}
  u_0 & u_1\\
  w_0 & w_1
 \end{pmatrix}
 =
 \begin{pmatrix}
  p_0 & p_1(p_1-p_0)^{-1}v_1\\
  1 & (p_1-p_0)^{-1}v_1
 \end{pmatrix}.
 \end{equation}
 Such formulas can be found in \cite{menjanahary2024cl,zube2013quat,zube2015rep}. 
 Note that if $p_0=\infty$, then we have a semi-line starting from $p_1$ in the direction of $v_1$. This semi-line can be parametrized using the control points:
 \begin{equation}
  \begin{pmatrix}
  u_0 & u_1\\
  w_0 & w_1
 \end{pmatrix}
 =
 \begin{pmatrix}
  1 & -p_1v_1\\
  0 & -v_1
 \end{pmatrix}.
 \end{equation}
\end{example}

\begin{remark}\rm 
A reparametrization of the arc is obtained if we multiply the weight $w_1$ by a constant $\lambda>0$. If $\lambda<0$, then a parametrization of the complementary arc is obtained. 
 If the arc is defined by the endpoints $p_0$, $p_1$ and a point $q$ on the complementary arc, then the weights can be assigned as
 \[w_0 = (q - p_0)^{-1}, \qquad w_1 = (p_1 - q)^{-1}.\]
\end{remark}

A bivariate generalization of the QB formula of circular arcs yields 
a so-called Dupin cyclide principal patch or simply a principal patch.
They are quad patches bounded by 4 circular arcs intersecting orthogonally at the corner points.
Note that the 4 corner points are always cocircular; see Figure~\ref{fig:principal-p}. This circularity condition can be interpreted in terms of cross-ratio. 
The cross-ratio between points $p_0,p_1,p_2,p_3\in \im\!\HH$ is defined as
\[\Cr(p_0,p_1,p_2,p_3)=(p_0-p_1)(p_1-p_2)^{-1}(p_2-p_3)(p_3-p_0)^{-1},\]
whenever the product is well-defined.
\begin{remark}\rm 
\label{rem:cross-ratio}
Four points $p_0,p_1,p_2,p_3\in \im\!\HH$ are cocircular if and only if their cross-ratio is real; see \cite[Lemma~2.3]{zube2015rep}.  
\end{remark}

A principal patch is uniquely determined by its four cocircular corner points and tangent vectors $v_1$, $v_2$ at one corner point. The tangent vectors at other points are obtained by reflection along the respective edges.

To simplify the notation in a bilinear QB formula, we use the binary indices in reversed order $0=00$, $1=10$, $2=01$, $3=11$.

\begin{lemma}
Let a principal patch be defined by cocircular corner points $p_0$, $p_1$, $p_2$, $p_3$ and orthogonal tangent vectors $v_1$ and $v_2$ 
at $p_0$ and let $v_3=v_1v_2$. Then, this patch can parametrized using the bilinear QB formula 
 \[P(s,t)=\frac{\sum_{i,j=0}^{1}p_{ij}w_{ij}B^1_{i}(s)B^1_j(t)}{\sum_{i,j=0}^{1}w_{ij}B^1_{i}(s)B^1_j(t)},\]
with the following homogeneous representation:
\begin{itemize}
\item[(i)]
    If $p_0 = \infty$ and 
    $p_1 \ne p_2$, then the first control point is $(u_0,w_0)=(1,0)$, and the others are $(p_iw_i,w_i)$  such that 
    \begin{equation}\label{inf-weights}
    w_1=-v_1,\ w_2=-v_2,\ w_3=(p_1-p_2) v_3.
    \end{equation}
\item[(ii)]
If all control points are finite, only $p_1$ and $p_2$ may coincide with $p_3$, then
\begin{align}\label{fin-weights}
w_0 = 1, \ w_1=q_{1} v_1,\ w_2=q_{2}v_2, \ 
w_3 = q_{3}\left(q_{1}-q_{2}\right)v_3,
\end{align}
where $q_{i}=(p_i-p_0)^{-1}$ for $i=1,2,3$.
\end{itemize}
\end{lemma}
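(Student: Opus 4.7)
The overall plan is to reduce case (ii) to case (i) by an inversion centered at $p_0$, and to prove case (i) by a direct inspection of the four boundary curves and the tangent directions at the corner $p_0$. Once those match the prescribed data, the uniqueness statement preceding the lemma identifies $P(s,t)$ with the required principal patch.

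For case (i), the restrictions of $P(s,t)$ to $s=0$ and $t=0$ are univariate QB curves whose homogeneous control points reduce exactly to $(1,0)$ and $(-p_i v_i, -v_i)$; this matches the semi-line formula from the Example, confirming that the two edges at $p_0=\infty$ are the rays through $p_1$, $p_2$ in the directions $v_1$, $v_2$. For the remaining edges at $s=1$ and $t=1$, the endpoints are $(p_1,p_3)$ and $(p_2,p_3)$; circularity is checked by computing the cross-ratio with the two endpoints and two parameter-interior points, and invoking Remark~\ref{rem:cross-ratio}. The specific choice $w_3=(p_1-p_2)v_3$ with $v_3=v_1v_2$ is precisely what makes this cross-ratio real, using the collinearity of $p_1,p_2,p_3$ implied by cocircularity with $\infty$. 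Finally, the partial derivatives of $P$ at $(s,t)=(0,0)$ are proportional to $v_1$ and $v_2$ by construction, which closes the case.

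For case (ii), I would apply $\inv_{p_0}^r$ for an arbitrary $r>0$ and invoke Remark~\ref{rem:inv-trans}. A direct substitution gives $(u_0',w_0')=(-r^2,0)$ and
\[
w_i' \;=\; u_i - p_0 w_i \;=\; (p_i-p_0)w_i \;=\; q_i^{-1}w_i \qquad (i=1,2,3),
\]
so the factors $q_i$ sitting in (\ref{fin-weights}) cancel, leaving $w_1'=v_1$, $w_2'=v_2$, $w_3'=(q_1-q_2)v_3$. Together with $(u_0',w_0')=(-r^2,0)$, this is, up to the global scalar $-r^2$, precisely the case (i) data for the inverted patch whose corners are $\infty$ and $\tilde p_i=\inv_{p_0}^r(p_i)$, with tangents $v_1/r^2$, $v_2/r^2$ at $\infty$ (the consistency check $\tilde p_1 - \tilde p_2 = -r^2(q_1-q_2)$ and $\tilde v_3 = v_3/r^4$ makes the scaling balance). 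Since $\inv_{p_0}^r$ is an involution preserving QB formulas, pulling back by it recovers~(\ref{fin-weights}).

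The step I expect to be the main obstacle is the cross-ratio verification in case (i) that certifies circularity of the arc edges at $s=1$ and $t=1$: one must convert the bilinear coefficients into a cross-ratio calculation and use $v_3=v_1v_2$ together with the collinearity of $p_1,p_2,p_3$ to conclude it is real, and the quaternionic non-commutativity forces care in the ordering of $(p_1-p_2)$ and $v_3$ inside $w_3$. The inversion step in case (ii) is then essentially routine algebra.
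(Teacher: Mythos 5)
The paper does not actually prove this lemma in situ --- its ``proof'' is a one-line citation of Theorem~2.2 of the reference \cite{menjanahary2024cl} --- so any self-contained argument is necessarily a different route. Your case~(ii) is sound and is in the same spirit as the paper's own passage from Theorem~\ref{thm:p0-inf} to Theorem~\ref{thm:p0-finite}: with $q=p_0$ Remark~\ref{rem:inv-trans} gives $w_i'=u_i-p_0w_i=(p_i-p_0)w_i$, so the factors $q_i$ cancel, $(u_0',w_0')=(-r^2,0)$, and all eight homogeneous control points become $-r^2$ times the case~(i) data for the inverted corners; a common real scalar changes nothing. That reduction is correct.

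The gap is in case~(i), at the closing step ``the uniqueness statement preceding the lemma identifies $P(s,t)$ with the required principal patch.'' That uniqueness statement only compares principal patches with one another: among principal patches, the four cocircular corners and the frame at one corner determine the patch. To invoke it you must first establish that the bilinear QB map $P$ defined by \eqref{inf-weights} \emph{is} a principal patch, i.e.\ that it parametrizes a Dupin cyclide piece by curvature lines, with $\partial_sP\perp\partial_tP$ throughout and the boundary arcs meeting orthogonally at \emph{all four} corners. You verify the boundary edges and orthogonality only at the single corner $p_0$; nothing in your argument excludes a bilinear QB surface with the same boundary and the same frame at $p_0$ that fails to be principal in the interior or at $p_1,p_2,p_3$. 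That global orthogonality (equivalently, the compatibility conditions on the quaternionic weights) is exactly the nontrivial content of the cited Theorem~2.2, and it is the step your plan does not supply. A smaller remark: the cross-ratio test of Remark~\ref{rem:cross-ratio} certifies that four given points of $\im\!\HH$ are cocircular; to conclude that the whole $s=1$ edge is a circular arc you should instead check that the segment joining its two homogeneous control points lies in the Study quadric $\Ss$ (the mixed polar term of \eqref{Study} vanishes), after which the edge is the $\pi$-image of a line in $\Ss$ and hence automatically a circle or line --- this is both necessary (your four sample points might not even lie in $\im\!\HH$ otherwise) and cleaner than the computation you anticipate being the main obstacle.
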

\begin{proof}
This is proved in \cite[Theorem 2.2]{menjanahary2024cl}.
\end{proof}

\begin{figure}
    \centering
    \includegraphics[width=0.45\linewidth]{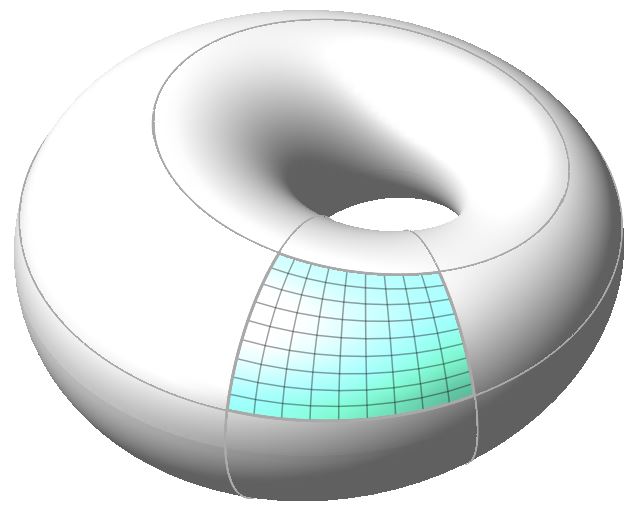}\hskip1cm
    \includegraphics[width=0.45\linewidth]{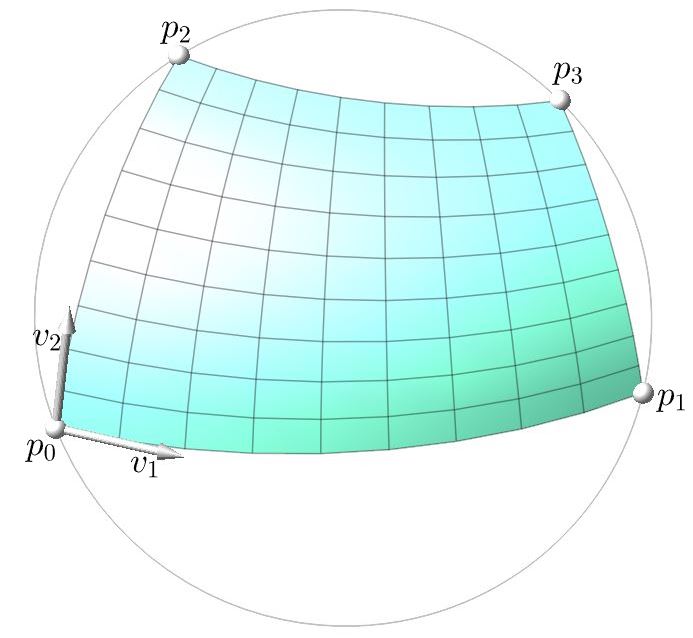}
    \caption{A Dupin cyclide principal patch.}
    \label{fig:principal-p}
\end{figure}

\section{Formulas for Dupin cyclidic Cubes}

A DC cube is a trilinear rational quaternionic map
\[
F: [0,1]^3 \to \widehat{\RR}^3, \quad F = \frac{U}{W}, \ U, W \in \HH[s,t,u],
\]
such that all three partial derivatives $\partial_s F$, 
$\partial_t F$, $\partial_u F$ are mutually orthogonal, and the Jacobian $\mathrm{Jac}(F)$ is not identically zero.
As we will investigate, DC cubes can be expressed using QB formulas with their 8 corner points as control points and quaternionic weights.
We highlight the following essential properties, the details can be found in \cite{menjanahary2024cl}:
\begin{itemize}
\item The 8 corner points of a DC cube lie on a sphere or a plane.
\item A DC cube is uniquely determined by its 3 adjacent faces at a corner point, see Fig \ref{fig:DC-finite}(b).
\item The last control point $p_7$ can be derived using Miquel theorem about the intersection of 3 circles; see Figure~\ref{fig:DC-inf}(a) and Figure~\ref{fig:DC-finite}(c).
\end{itemize}

\begin{figure}[H]
    \centering
    \begin{subfigure}[b]{0.5\textwidth}
        \centering
         \includegraphics[width=5cm]{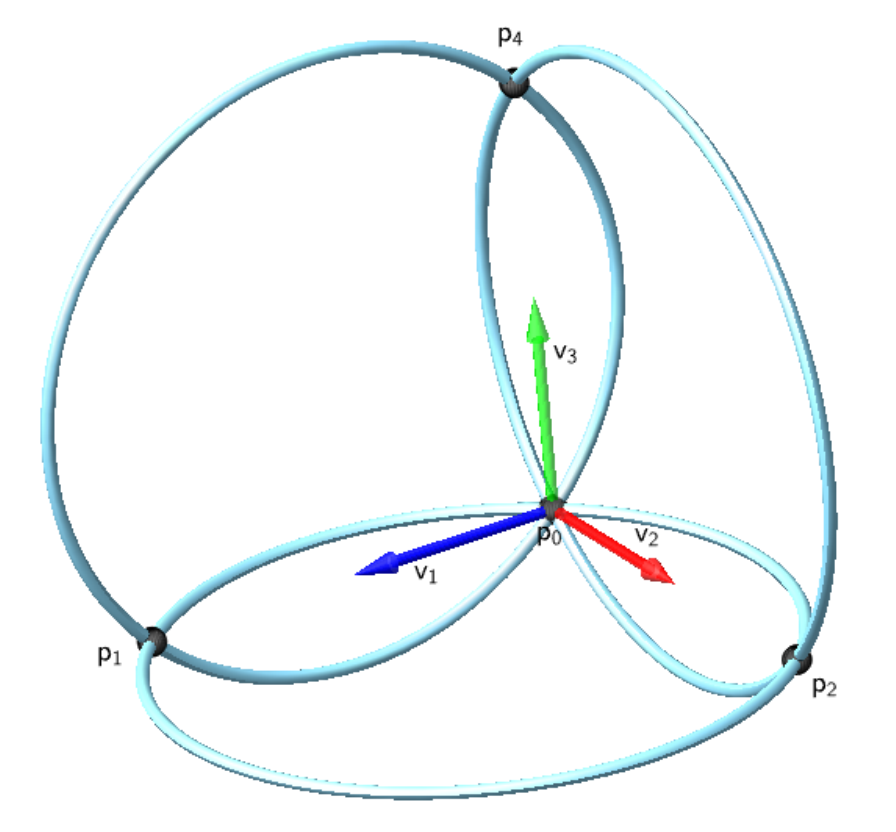}
        \caption{Initial data.}
    \end{subfigure}
    \hfill
    \begin{subfigure}[b]{0.45\textwidth}
        \centering
        \includegraphics[width=5cm]{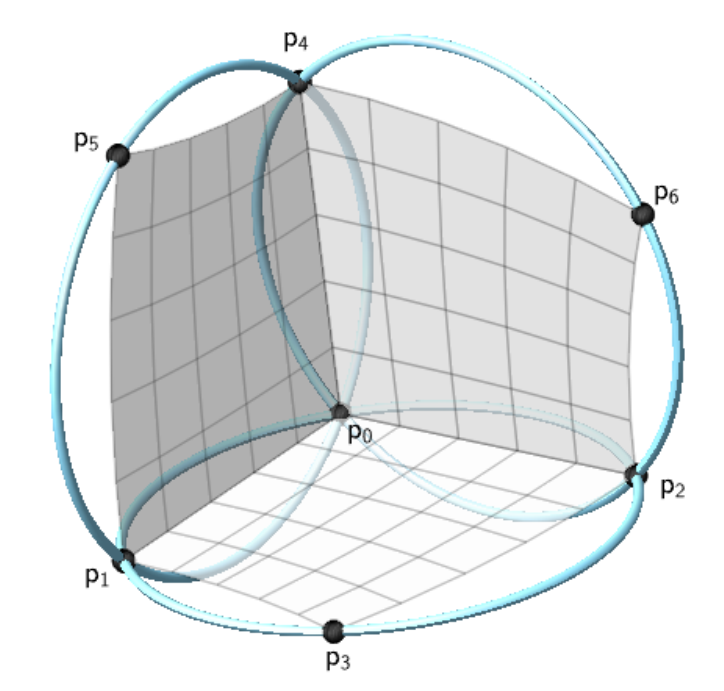}
        \caption{Three compatible faces.}
    \end{subfigure}\\
    \hfill
    \begin{subfigure}[b]{0.5\textwidth}
        \centering
        \includegraphics[width=5cm]{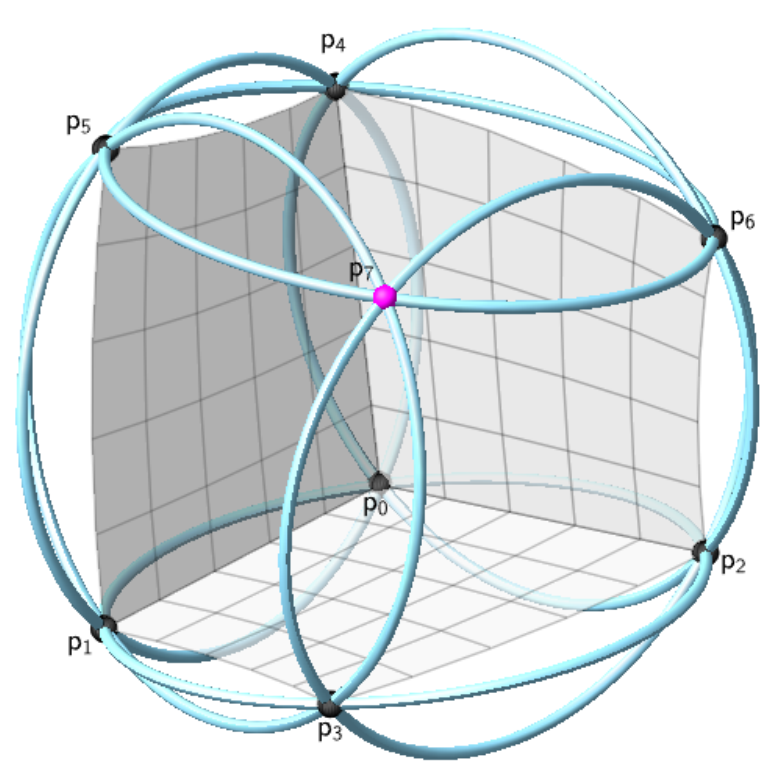}
        \caption{The Miquel point $p_7$.}
    \end{subfigure}
    \hfill
    \begin{subfigure}[b]{0.45\textwidth}
        \centering
        \includegraphics[width=5cm]{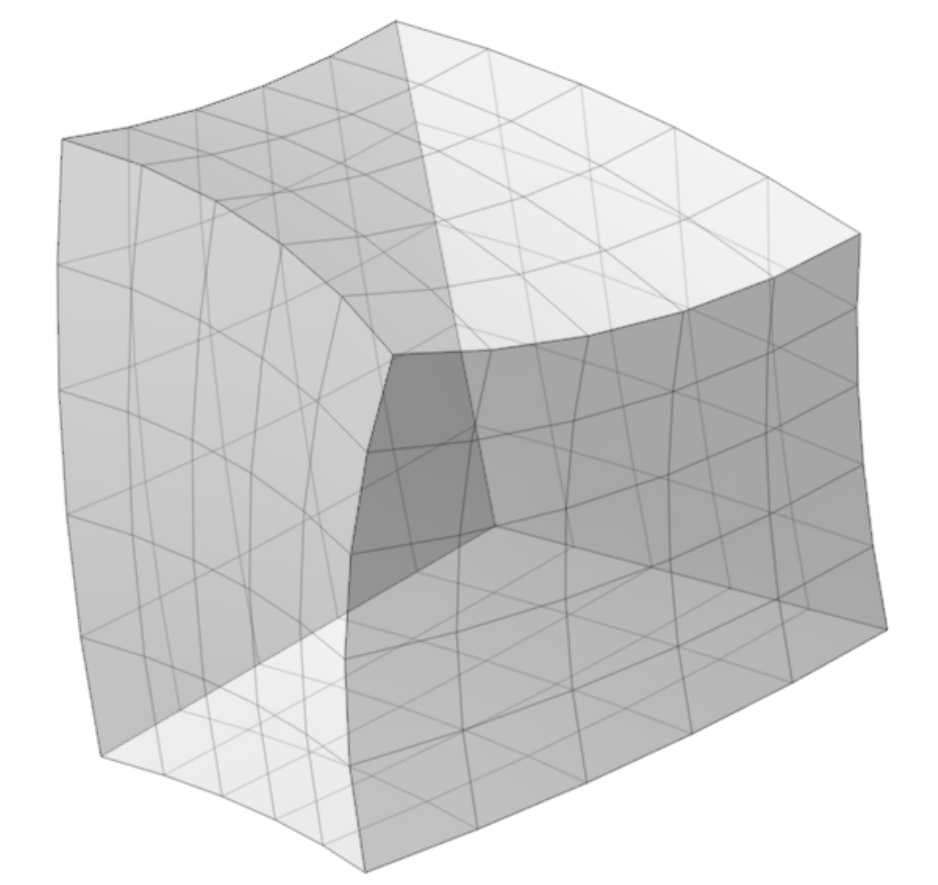}
        \caption{The resulting DC cube.}
    \end{subfigure}
    \caption{Steps of the construction of a DC cube.}
    \label{fig:DC-finite}
\end{figure}

\noindent
To construct compatible 3 adjacent faces at a corner point, say $p_0$, we use an orthogonal frame $(v_1,v_2,v_3)$ at $p_0$ and three (arbitrary) corner points $p_1$, $p_2$, $p_4$; see Figure~\ref{fig:DC-finite}(a).
The 3 compatible faces are determined if we fix the fourth corner point of each patch:  $p_{i+j}$ on the circle ${\cal C}(p_0,p_i,p_j)$, $ij\in \{12,14, 24\}$. Here ${\cal C}(p_0,p_i,p_j)$ denotes the circle through the 3 points $p_0,p_i,p_j$.
The formula for the last control point $p_7$ depending on the other 7 corner points will be derived in Corollaries~\ref{cor:miquel} and \ref{cor:miquel-gen}.

This paper's derivation first constructs a DC cube using $p_0=\infty$ and then applies inversions to derive the QB formula for general DC cubes. Similar to the bilinear case, to simply express a QB formula for DC cubes, we use the binary indices in reversed order $0=000$, $1=100$, $2=010$, $3=110$, $4=001$, $5=101$, $6=011$, $7=111$.

\begin{theorem}
\label{thm:p0-inf}
Let a DC cube be defined by 8 corner points $p_0=\infty$, $p_1,p_2,p_4\in \im\!\HH$, $p_3,p_5,p_6$ on the lines ${\cal L}(p_1,p_2), {\cal L}(p_1,p_4), {\cal L}(p_2,p_4)$ respectively, with the associated Miquel point $p_7$ and an orthonormal frame $(v_1,v_2,v_3=v_1v_2)$ at $p_0$. Then, we can parametrize this DC cube using the trivariate QB formula
\[F(s,t,u)=\frac{\sum_{i,j,k=0}^{1}p_{ijk}w_{ijk}B^1_i(s)B^1_j(t)B^1_k(u)}{\sum_{i,j,k=0}^{1}w_{ijk}B^1_i(s)B^1_j(t)B^1_k(u)},\]
with the first 7 homogeneous control points: 
\begin{align*}
\begin{pmatrix}
1 & -p_1v_1 & -p_2v_2 & p_3(p_1-p_2)v_3 & -p_4v_3 & p_5(p_4-p_1)v_2 & p_6(p_2-p_4)v_1\\
0 & -v_1 & -v_2 & (p_1-p_2)v_3 & -v_3 & (p_4-p_1)v_2 & (p_2-p_4)v_1
\end{pmatrix},
\end{align*}
and with the last control point $(p_7w_7,w_7)$, where $w_7$ has the following equivalent expressions
\begin{align}
w_7 &= (p_7-p_1)^{-1}(p_4-p_1)(p_3-p_5)(p_1-p_2)\label{eq:w7_1}\\
    &= (p_7-p_2)^{-1}(p_1-p_2)(p_6-p_3)(p_2-p_4)\label{eq:w7_2}\\
    &= (p_7-p_4)^{-1}(p_2-p_4)(p_5-p_6)(p_4-p_1).\label{eq:w7_3}
\end{align}
\end{theorem}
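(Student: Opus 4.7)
The plan is to prove the theorem in two pieces: first fix the seven homogeneous control points $(u_i, w_i)$ for $i \le 6$ from the three principal patches meeting at $p_0 = \infty$, and then compute the last weight $w_7$ from one of the three opposite principal patches meeting at $p_7$.

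For the first piece, I will apply case (i) of the preceding lemma to each of the three faces adjacent to $p_0$ in turn: $\{p_0, p_1, p_2, p_3\}$ with tangents $(v_1, v_2)$ at $p_0$ and product $v_3 = v_1 v_2$; $\{p_0, p_1, p_4, p_5\}$ with tangents $(v_1, v_3)$ and product $v_1 v_3 = -v_2$; and $\{p_0, p_2, p_4, p_6\}$ with tangents $(v_2, v_3)$ and product $v_2 v_3 = v_1$. The collinearity hypotheses on $p_3, p_5, p_6$ are precisely the cocircularity conditions required by case (i) because $p_0 = \infty$. Each face supplies homogeneous control points for its four corners; the weights assigned to the shared corners $p_0, p_1, p_2, p_4$ agree across the three applications, so the union is well-defined and gives exactly the first seven columns stated.

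For $w_7$, I focus on the opposite face at $s = 1$, whose corners $p_1, p_3, p_5, p_7$ must form a principal patch. The plan is to right-multiply all four weights of this face by $w_1^{-1}$, normalizing it to case (ii) of the preceding lemma with base corner $p_1$. The tangents $v_1', v_2'$ at $p_1$ inside this face are then extracted from the M\"obius derivative formula as $v_1' = (p_3 - p_1) w_3 w_1^{-1}$ and $v_2' = (p_5 - p_1) w_5 w_1^{-1}$; substituting the first-piece weights and using that $(p_3 - p_1)(p_1 - p_2)$ and $(p_5 - p_1)(p_4 - p_1)$ are real scalars (each is a product of parallel imaginary quaternions), one obtains $v_1' = A v_2$, $v_2' = -B v_3$, and $v_3' = v_1' v_2' = -AB v_1$ with $A, B \in \R$. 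Case (ii) then produces $w_7' = (p_7 - p_1)^{-1}\bigl((p_3 - p_1)^{-1} - (p_5 - p_1)^{-1}\bigr) v_3'$ and hence $w_7 = w_7' w_1$; writing the bracket as $(p_3 - p_1)^{-1}(p_5 - p_3)(p_5 - p_1)^{-1}$ and cancelling $(p_5 - p_1)^{-1}$ against $B$ and $(p_3 - p_1)^{-1}$ against $A$ simplifies $w_7$ to $(p_7 - p_1)^{-1}(p_2 - p_1)(p_5 - p_3)(p_4 - p_1)$. The identity $p_5 - p_3 = \gamma(p_4 - p_1) - \beta(p_2 - p_1)$ (again from collinearity, with $\gamma, \beta \in \R$) then rearranges this algebraically to the claimed form (\ref{eq:w7_1}). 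The two other expressions (\ref{eq:w7_2}) and (\ref{eq:w7_3}) arise from the analogous calculation on the faces $t = 1$ and $u = 1$ with base corners $p_2$ and $p_4$; that all three agree is precisely the algebraic content of $p_7$ being the Miquel point of the three circles.

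The main obstacle will be the quaternionic algebra in the second piece: the non-commutative products must be reordered with meticulous attention to sign, and the simplification only works because specific subproducts of collinear imaginary quaternions collapse to real scalars, which then telescope against the neighboring inverses. Checking that the apparently asymmetric final expression $(p_7 - p_1)^{-1}(p_4 - p_1)(p_3 - p_5)(p_1 - p_2)$ really equals $(p_7 - p_1)^{-1}(p_2 - p_1)(p_5 - p_3)(p_4 - p_1)$ is a small but instructive step that hinges precisely on $p_5 - p_3$ lying in the real span of $p_2 - p_1$ and $p_4 - p_1$.
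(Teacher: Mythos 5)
Your proposal is correct and follows essentially the same route as the paper: the first seven homogeneous control points come from applying case (i) of the lemma to the three faces meeting at $p_0=\infty$, and $w_7$ is obtained by applying case (ii) to a face through $p_7$ and enforcing compatibility with the already-fixed edge weights, the key point in both arguments being that collinearity makes the relevant rescaling factors real so they can be commuted and cancelled. The only cosmetic differences are that you work out the face $f_{1357}$ where the paper details $f_{4567}$ (normalizing by right-multiplication by $w_1^{-1}$ rather than left-multiplying by real scalars $\lambda_i$), and you attribute the agreement of the three expressions for $w_7$ to the Miquel property, whereas the paper delegates exactly this step to its cited compatibility lemma.
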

\begin{proof}
Let $f_{0123}$, $f_{0415}$ and $f_{0246}$ be the initial three faces of the DC cube meeting at $p_0=\infty$; see Figure \ref{fig:DC-inf}(b). Using Formula \eqref{inf-weights}, we obtain the presented formula for $w_i$, $i=0,\ldots,6$. Note that the frames at $p_1$, $p_2$, and $p_4$ for the DC cube are the same.
On the face $f_{4567}$, we compute the weights using Formula \eqref{fin-weights}. This gives
\begin{align*}
w_4'&=1, \quad
w_5'=(p_5-p_4)^{-1}v_1,\quad
w_6'= (p_6-p_4)^{-1}v_2,\\
w_7'&=(p_7-p_4)^{-1}(p_6-p_4)^{-1}(p_6-p_5)(p_5-p_4)^{-1}v_3.
\end{align*}
To get the compatibility at $p_4$, we need to multiply such weights with $-v_3$. This gives
\begin{align*}
w_4''&=-v_3,\quad
w_5''=(p_5-p_4)^{-1}v_2,\quad
w_6''=-(p_6-p_4)^{-1}v_1,\\
w_7''&=(p_7-p_4)^{-1}(p_6-p_4)^{-1}(p_6-p_5)(p_5-p_4)^{-1}.
\end{align*}
To get the compatibility at $p_5$ and $p_6$, we multiply $w_5''$ by $\lambda_1=(p_4-p_1)(p_5-p_4)$ and $w_6''$ by $\lambda_2=-(p_2-p_4)(p_6-p_4)$. Note  that $\lambda_1$ and $\lambda_2$ are real because the points $p_1,p_4,p_5$ and similarly $p_2,p_4,p_6$ are collinear. Hence, a reparametrization of the face $f_{4567}$ using $w_4''=w_4$, $\lambda_1 w_5''=w_5$, $\lambda_2 w_6''=w_6$ and $\lambda_1\lambda_2w_7''=w_7$, which is the compatible weight at $p_7$. In the product $\lambda_1\lambda_2w_7''$, the factors $p_5-p_4$ and $p_6-p_4$ of $\lambda_1$ and $\lambda_2$ will be eliminated, giving the formula \eqref{eq:w7_3} for $w_7$.
By studying the compatibility similarly on the faces $f_{1357}$ and $f_{2637}$, we obtain alternative formulas for $w_7$ in \eqref{eq:w7_1} and \eqref{eq:w7_2}. It follows from the compatibility lemma \cite[Lemma 3.3]{menjanahary2024cl} that the 3 found weights have to coincide, giving a compatible parametrization of the DC cube; see Figure~\ref{fig:DC-inf}(c).
\end{proof}

\begin{figure}[H]
    \centering
    \begin{subfigure}[b]{\textwidth}
        \centering
         \includegraphics[width=6cm]{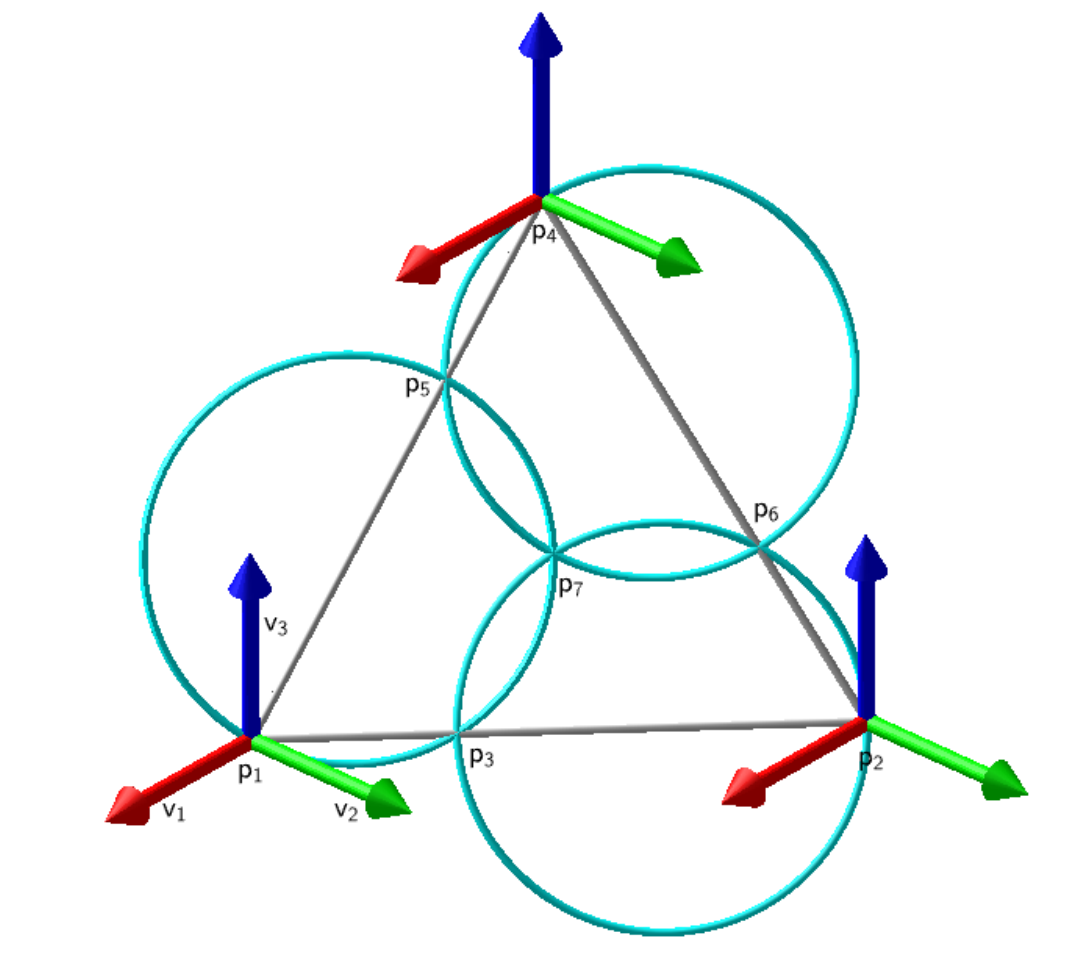}
        \caption{Initial data for a DC cube with one corner point on infinity.}
    \end{subfigure}\\
    \hfill
    \begin{subfigure}[b]{0.5\textwidth}
        \centering
        \includegraphics[width=5cm]{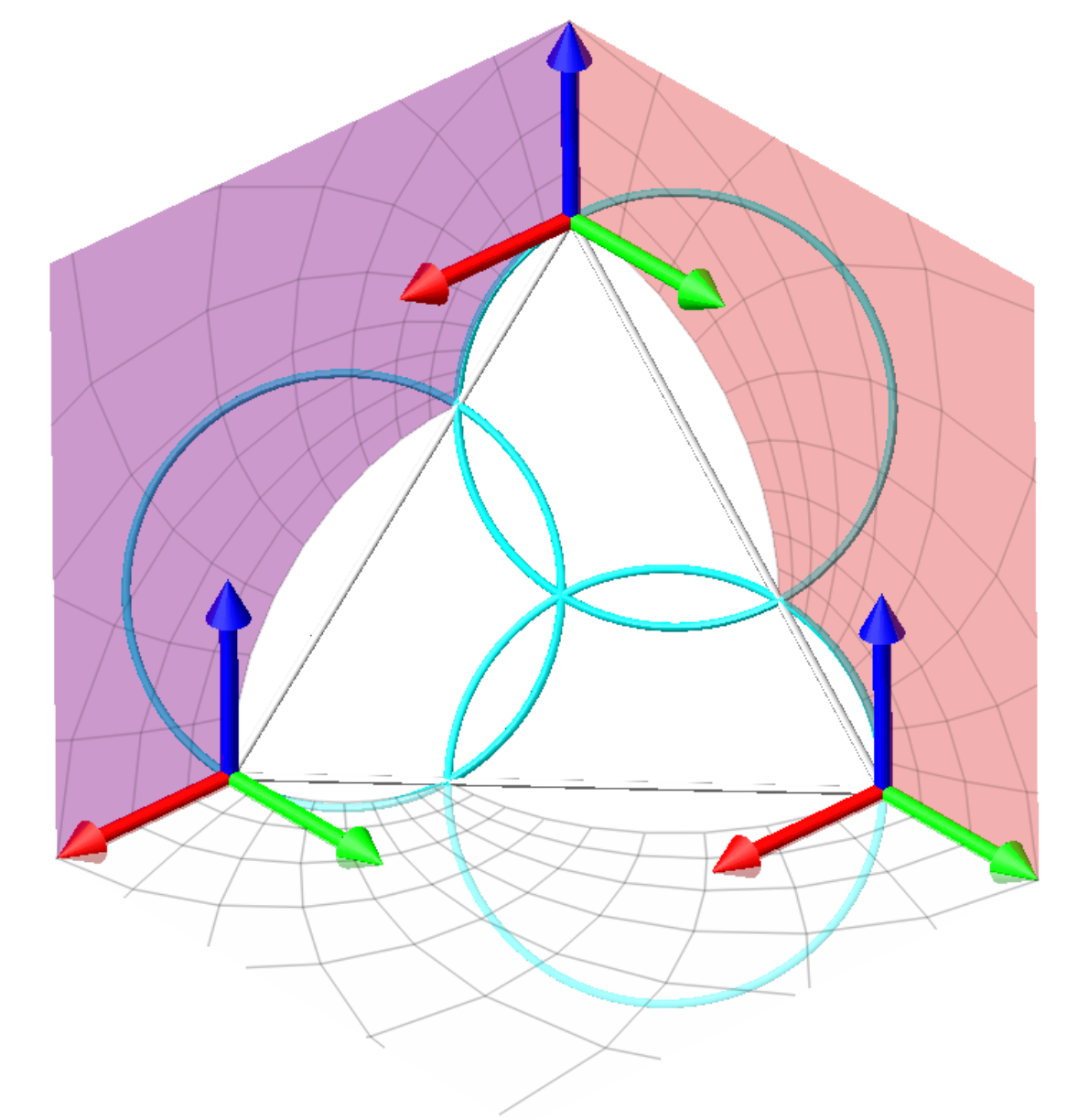}
        \caption{Three compatible faces of a DC cube with a common intersection point on infinity.}
    \end{subfigure}
    \hfill
    \begin{subfigure}[b]{0.4\textwidth}
        \centering
        \includegraphics[width=5cm]{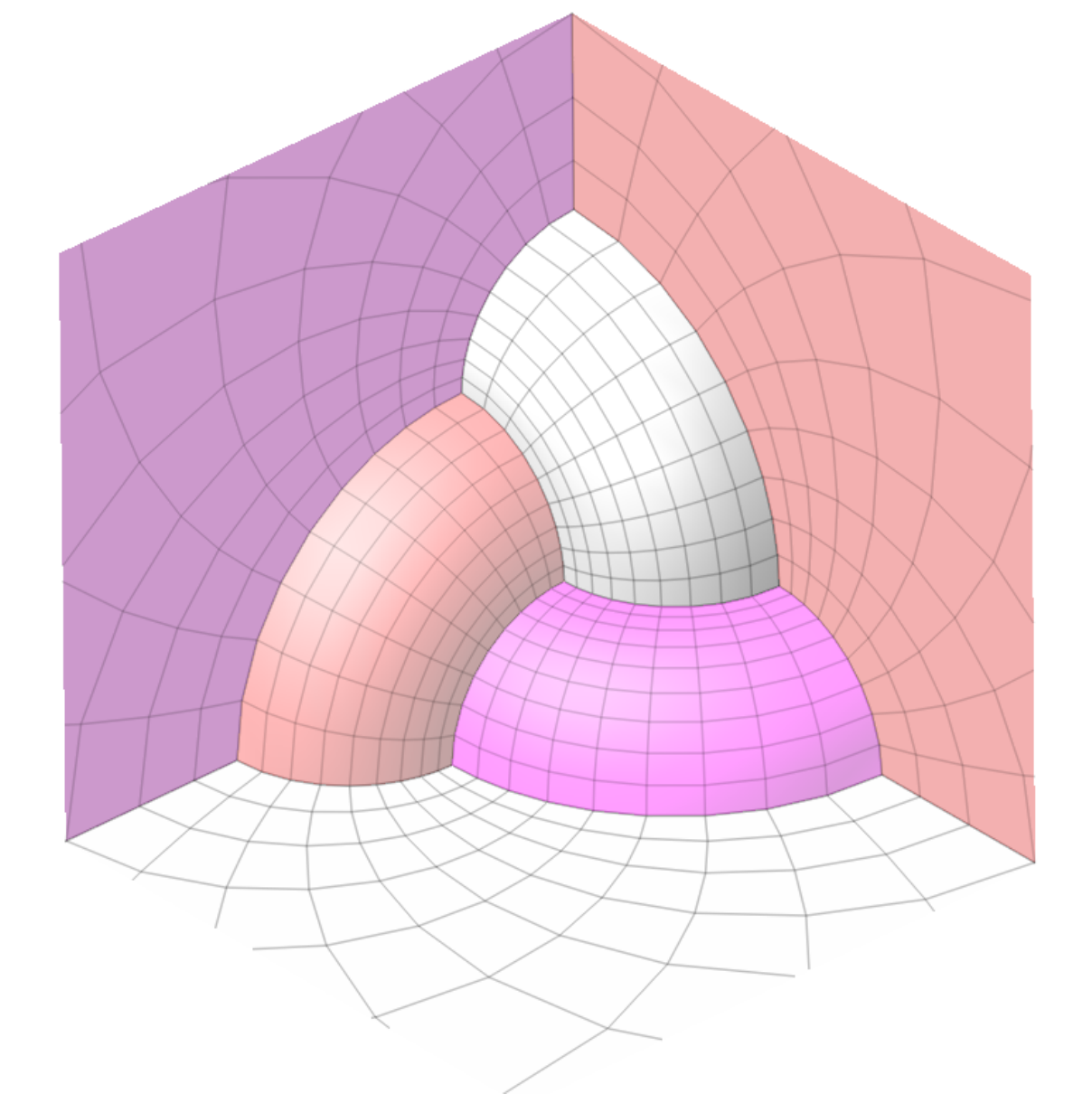}
        \caption{The resulting 6 faces of the DC cube from 3 compatible faces.}
    \end{subfigure}
    \caption{DC cube construction steps with one control point on infinity.}
    \label{fig:DC-inf}
\end{figure}

\begin{corollary}
\label{cor:miquel}
The Miquel point $p_7$ can be expressed as 
\begin{align}
p_7 &= p_1+ A(A-B)^{-1} (p_2-p_1)\label{eq:miq1}\\
    &= p_2+ B(B-C)^{-1} (p_4-p_2)\label{eq:miq2}\\
    &= p_4+ C(C-A)^{-1} (p_1-p_4)\label{eq:miq3},
\end{align}
where $A$, $B$, $C$ are the right-quaternionic factors of $w_7$, namely
\begin{align*}
A&=(p_4-p_1)(p_3-p_5)(p_1-p_2),\\
B&=(p_1-p_2)(p_6-p_3)(p_2-p_4),\\
C&=(p_2-p_4)(p_5-p_6)(p_4-p_1).
\end{align*}
\end{corollary}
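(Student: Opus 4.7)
The plan is to derive the three formulas \eqref{eq:miq1}--\eqref{eq:miq3} for $p_7$ directly from the three equivalent expressions \eqref{eq:w7_1}--\eqref{eq:w7_3} for $w_7$ given by Theorem \ref{thm:p0-inf}. Using the notation of $A$, $B$, $C$ introduced in the corollary statement, those three expressions rewrite compactly as $(p_7-p_1)w_7 = A$, $(p_7-p_2)w_7 = B$, and $(p_7-p_4)w_7 = C$, since inverses on the left can be moved to the right by multiplying both sides by $p_7-p_i$.

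Next, I would subtract these equations pairwise to eliminate the unknown $p_7$ on one side. For instance, subtracting $(p_7-p_2)w_7=B$ from $(p_7-p_1)w_7=A$ gives $(p_2-p_1)w_7 = A-B$, hence $w_7 = (p_2-p_1)^{-1}(A-B)$, and therefore $w_7^{-1} = (A-B)^{-1}(p_2-p_1)$. Substituting this into $(p_7-p_1) = A w_7^{-1}$ yields $p_7 = p_1 + A(A-B)^{-1}(p_2-p_1)$, which is exactly \eqref{eq:miq1}. The same calculation applied to the pair $(B,C)$ produces \eqref{eq:miq2}, and the pair $(C,A)$ produces \eqref{eq:miq3}; the cyclic symmetry in $\{1,2,4\}$ mirrors that of the right-quaternionic factors $A,B,C$.

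The only point requiring care, which is also the closest thing to an obstacle, is the non-commutativity of $\HH$: one must consistently keep $w_7$ on the right throughout the derivation (so that the factor one divides away really is $w_7$, not $\bar w_7$ or a conjugate of it), and one must not transpose the order of the factors $p_j-p_i$, $A$, $B$ when passing from $w_7$ to $w_7^{-1}$. Once the ordering conventions are respected, the argument is a short three-line computation per formula, and the consistency of the three resulting expressions for $p_7$ is already guaranteed by the compatibility established in Theorem \ref{thm:p0-inf}; no further geometric input is needed.
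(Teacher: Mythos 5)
Your proposal is correct and follows essentially the same route as the paper: both start from the equality of the expressions \eqref{eq:w7_1}--\eqref{eq:w7_3} for $w_7$ and perform a short non-commutative elimination, the only cosmetic difference being that the paper eliminates $w_7$ by forming the ratio $BA^{-1}=(p_7-p_2)(p_7-p_1)^{-1}$ while you eliminate $p_7$ first via $(p_2-p_1)w_7=A-B$ and then substitute back. The ordering care you flag is exactly the point the paper's computation also hinges on, so nothing is missing.
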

\begin{proof}
From \eqref{eq:w7_1} and \eqref{eq:w7_2}, we have $w_7=(p_7-p_1)^{-1}A=(p_7-p_2)^{-1}B$. This implies
\begin{align*}
 BA^{-1}
 &=(p_7-p_2)(p_7-p_1)^{-1} \\
 &=(p_7-p_1+p_1-p_2)(p_7-p_1)^{-1}\\
 &=1+(p_1-p_2)(p_7-p_1)^{-1}.
\end{align*}
Hence $(p_7-p_1)^{-1}=(p_1-p_2)^{-1}(BA^{-1}-1)=(p_1-p_2)^{-1}(B-A)A^{-1}$, i.e, $p_7-p_1=A(B-A)^{-1}(p_1-p_2)=A(A-B)^{-1}(p_2-p_1)$. We obtain \eqref{eq:miq1} by adding $p_1$ on both sides. The expressions \eqref{eq:miq2} and \eqref{eq:miq3} can be obtained similarly by considering other pairs of expressions for $w_7$.
\end{proof}

We apply inversions to relate the formula in Theorem \ref{thm:p0-inf} to a general formula for DC cubes with finite control points.

\begin{theorem}
\label{thm:p0-finite}
Let a DC cube be defined by 8 corner points $p_0$, $p_1$, $p_2$, $p_4\in \im\!\HH$, $p_3$ on the circle ${\cal C}(p_0,p_1,p_2)$, $p_5$ on the circle ${\cal C}(p_0,p_1,p_4)$, $p_6$ on the circle ${\cal C}(p_0,p_2,p_4)$, the associated Miquel point $p_7$, and an orthonormal frame $(v_1,v_2,v_3=v_1v_2)$ at $p_0$. Let $q_{i}=(p_i-p_0)^{-1}$ for $i=1,\dots,7$. Then, this cube can be parametrized by the QB formula
\[F(s,t,u)=\frac{\sum_{i,j,k=0}^{1}p_{ijk}w_{ijk}B^1_i(s)B^1_j(t)B^1_k(u)}{\sum_{i,j,k=0}^{1}w_{ijk}B^1_i(s)B^1_j(t)B^1_k(u)},\]
where
\begin{align*}
w_0 &= 1, \qquad w_1=q_{1} v_1,\qquad  \; w_2=q_{2}v_2, \qquad  w_4=q_{4}v_3\\ 
w_3 &= q_{3}\left(q_{1}-q_{2}\right)v_3,
\;\; w_5 = q_{5}\left(q_{4}-q_{1}\right)v_2,
\;\; w_6 = q_{6}\left(q_{2}-q_{4}\right)v_1,\\
w_7 &= -q_{7}(q_{7}-q_{1})^{-1}(q_{4}-q_{1})(q_{3}-q_{5})(q_{1}-q_{2}),\\
    &= -q_{7}(q_{7}-q_{2})^{-1}(q_{1}-q_{2})(q_{6}-q_{3})(q_{2}-q_{4})\\
    &= -q_{7}(q_{7}-q_{4})^{-1}(q_{2}-q_{4})(q_{5}-q_{6})(q_{4}-q_{1}).
\end{align*}
\end{theorem}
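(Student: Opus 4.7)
The plan is to reduce to Theorem~\ref{thm:p0-inf} by applying the inversion $\Phi = \inv_{p_0}^{1}$ (radius $1$ is the convenient choice that avoids a later rescaling). This map swaps $p_0$ with $\infty$ and sends each other corner to $\tilde p_i = p_0 - q_i$, so that $\tilde p_j - \tilde p_k = -(q_j - q_k)$ for $j,k \ge 1$. A first-order local analysis of $\Phi$ near $p_0$ gives $\Phi(p_0 + \varepsilon v) = p_0 - \varepsilon^{-1} v^{-1} = p_0 + \varepsilon^{-1} v$ for any unit $v\in\im\!\HH$ (using $v^{-1} = -v$), which shows that the three unit edge-tangents $v_1, v_2, v_3$ at $p_0$ become the unit frame at $\tilde p_0 = \infty$ for the image cube $\Phi(F)$. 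Thus Theorem~\ref{thm:p0-inf} applies to $\Phi(F)$ and yields homogeneous control points $(\tilde u_0, \tilde w_0) = (1, 0)$ and $(\tilde p_i \tilde w_i, \tilde w_i)$ for $i \ge 1$, with the $\tilde w_i$ given by the formulas of that theorem in the $\tilde p_j$'s.

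Next, apply Remark~\ref{rem:inv-trans} with $q = p_0$ and $r = 1$ to push this homogeneous representation back through $\Phi$ (using that $\Phi$ is an involution). The weight formula $w_i' = \tilde u_i - p_0 \tilde w_i$ immediately gives $w_0' = 1$ and, for $i \ge 1$,
\[
w_i' = (\tilde p_i - p_0)\tilde w_i = -q_i\, \tilde w_i .
\]
Substituting and using $\tilde p_j - \tilde p_k = -(q_j - q_k)$ turns each stated $\tilde w_i$ into the advertised expression. For instance, $\tilde w_3 = (\tilde p_1 - \tilde p_2) v_3 = -(q_1 - q_2) v_3$, hence $w_3' = q_3(q_1 - q_2) v_3$; similarly for $w_2, w_4, w_5, w_6$. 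For $w_7$, the three sign factors from $(\tilde p_4 - \tilde p_1),\ (\tilde p_3 - \tilde p_5),\ (\tilde p_1 - \tilde p_2)$ combine with the one from $(\tilde p_7 - \tilde p_1)^{-1}$ to give $(-1)^4 = +1$, so \eqref{eq:w7_1} becomes $\tilde w_7 = (q_7 - q_1)^{-1}(q_4 - q_1)(q_3 - q_5)(q_1 - q_2)$; left-multiplying by $-q_7$ yields the first stated expression for $w_7$. The other two expressions follow identically from \eqref{eq:w7_2} and \eqref{eq:w7_3}. The identity $u_i' = p_i w_i'$ is automatic, since $\Phi$ preserves the projection $\pi : \mathcal S \to \widehat\RR^3$ and sends $\tilde p_i$ back to $p_i$.

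The main obstacle I anticipate is not the algebra but the bookkeeping around the frame correspondence and the rescaling degree of freedom in the weights. One must verify that the unit frame at $\infty$ of $\Phi(F)$ is literally the same triple $(v_1, v_2, v_3)$ (not $(-v_1, -v_2, -v_3)$, nor rescaled), which is exactly what the choice $r = 1$ delivers via the first-order expansion above. With any other radius the three final weights $w_1, w_2, w_4$ carry one power of $r^2$ while $w_3, w_5, w_6, w_7$ carry a different power, forcing a real rescaling; picking $r = 1$ makes all eight weights appear in their stated form simultaneously, after which the proof reduces to the direct sign count outlined above.
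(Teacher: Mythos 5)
Your proposal is correct and follows essentially the same route as the paper: apply $\inv_{p_0}^1$ to send $p_0$ to $\infty$, invoke Theorem~\ref{thm:p0-inf} for the image cube, and transform the homogeneous control points back via Remark~\ref{rem:inv-trans}, using $w_i'=(\tilde p_i-p_0)\tilde w_i=-q_i\tilde w_i$. Your write-up is somewhat more thorough than the paper's (which only works out $w_7$ explicitly), and your checks of the frame correspondence at $\infty$ and of the sign bookkeeping are accurate.
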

\begin{proof}
 This is equivalent to the formula in Theorem \ref{thm:p0-inf} using inversions as addressed in Remark \ref{rem:inv-trans}. For instance, let us consider the derivation of $w_7$. We apply first $\inv_{p_0}^1$ and all the control points are transformed to $p_0'=\infty$ and $p_i'=p_0-q_{i}$, $i=1,\ldots,7$. 
 By Theorem~\ref{thm:p0-inf}, Formula~\eqref{eq:w7_1}, we have $w_7'=(q_{7}-q_{1})^{-1}(q_{4}-q_{1})(q_{3}-q_{5})(q_{1}-q_{2})$. Hence, by applying the same inversion, we obtain
 $w_7=(p_7'-p_0)w_7'=-q_{7}w_7'$. This coincides with the first displayed formula for $w_7$. The other two equivalent formulas follow from Formulas~\eqref{eq:w7_2} and \eqref{eq:w7_3}. 
\end{proof}

The following result follows from Corollary \ref{cor:miquel} by applying inversions. 

\begin{corollary}
\label{cor:miquel-gen}
With the notations in Theorem $\ref{thm:p0-finite}$, the 8th control point $p_7$ of the DC cube, analogue of the Miquel point on the plane, can be expressed as 
\begin{align}
p_7 &= p_0 + [q_{1}+ A'(A'-B')^{-1} (q_{2}-q_{1})]^{-1},
\end{align}
where
\begin{align*}
A'&=(q_{4}-q_{1})(q_{3}-q_{5})(q_{1}-q_{2}),\\
B'&=(q_{1}-q_{2})(q_{6}-q_{3})(q_{2}-q_{4}).
\end{align*}
\end{corollary}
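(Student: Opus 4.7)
The plan is to derive this from Corollary \ref{cor:miquel} by transporting the configuration through the inversion $\inv_{p_0}^1$ that sends $p_0$ to $\infty$, applying the already-proved infinity-version of the Miquel formula to the image, and then inverting back. This mirrors the strategy used in the proof of Theorem \ref{thm:p0-finite}.

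Concretely, I would fix $\inv_{p_0}^1(p) = p_0 - (p-p_0)^{-1}$, so that $p_0' = \infty$ and $p_i' = p_0 - q_i$ for $i=1,\ldots,7$. Since inversions preserve the class of DC cubes and send circles and lines to circles and lines, the image configuration is a DC cube with one corner at infinity whose Miquel point is $p_7'$, so Corollary \ref{cor:miquel} applies and gives $p_7' = p_1' + A_{\mathrm{inv}}(A_{\mathrm{inv}} - B_{\mathrm{inv}})^{-1}(p_2' - p_1')$, where $A_{\mathrm{inv}}, B_{\mathrm{inv}}$ are the three-fold products of differences $p_i' - p_j'$ appearing in that corollary. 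The identity $p_i' - p_j' = q_j - q_i$ then lets me rewrite each factor in terms of the $q_i$'s: each triple product absorbs three minus signs, yielding $A_{\mathrm{inv}} = -A'$ and $B_{\mathrm{inv}} = -B'$. These common signs cancel in the quotient $A_{\mathrm{inv}}(A_{\mathrm{inv}}-B_{\mathrm{inv}})^{-1} = A'(A'-B')^{-1}$, and $p_2' - p_1' = q_1 - q_2$. Substituting $p_1' = p_0 - q_1$ and $p_7' = p_0 - q_7$ and rearranging gives $q_7 = q_1 + A'(A'-B')^{-1}(q_2 - q_1)$, after which $p_7 = p_0 + q_7^{-1}$ is exactly the stated formula.

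The step requiring the most care is the sign bookkeeping in the noncommutative setting: one has to remember that $(-X)^{-1} = -X^{-1}$ but that products and differences do not commute, so rewriting $A_{\mathrm{inv}}$ and $B_{\mathrm{inv}}$ factor-by-factor must be done in the given order. The clean structure of the answer is precisely due to the triple-product form of $A'$ and $B'$: the three sign flips collapse to a single $-1$ on each, which cancels in the quotient and leaves no residual factors to track.
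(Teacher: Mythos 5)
Your proposal is correct and follows exactly the route the paper intends: the paper states only that the corollary ``follows from Corollary~\ref{cor:miquel} by applying inversions,'' and your argument is a fully worked-out version of that, with the sign bookkeeping ($A_{\mathrm{inv}}=-A'$, $B_{\mathrm{inv}}=-B'$, cancelling in the quotient since $-1$ is central in $\HH$) done correctly.
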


\begin{figure}
    \centering
    \includegraphics[width=0.42\linewidth]{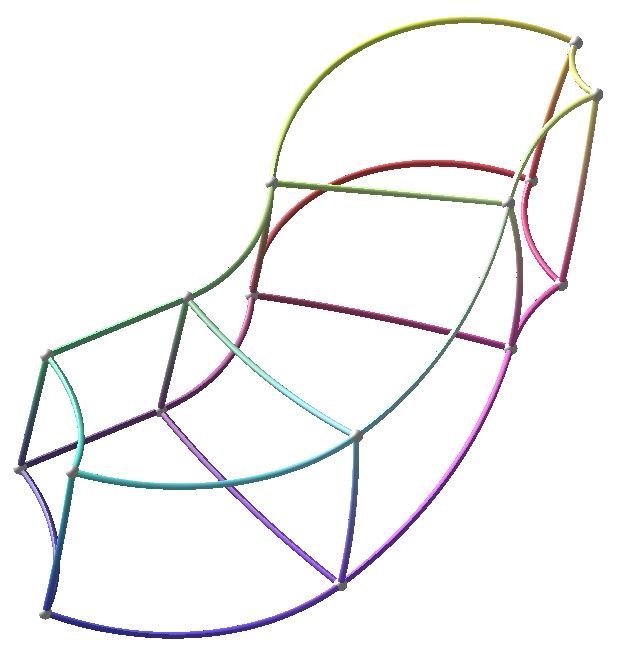}\hskip1cm
    \includegraphics[width=0.45\linewidth]{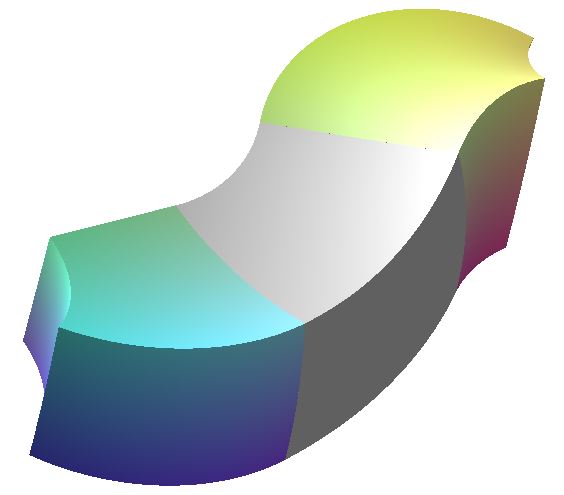}
    \caption{A 3D-cyclidic net.}
    \label{fig:3d-net}
\end{figure}

\begin{remark}\rm
The QB formula presented in this paper can be used to model 3D cyclidic nets \cite{bobenko2008discrete}, which are volumetric objects made up of smoothly blended DC cubes; see Figure~\ref{fig:3d-net}. A 3D cyclidic net is defined by its vertices, which are (prescribed) corner points of the DC cubes, and a frame at one vertex. This frame can be extended to other corner points by reflecting it with respect to the median planes of two neighboring vertices, hence fully determining the DC cubes. Each cube can be parametrized using Theorem~\ref{thm:p0-finite}.
\end{remark}

\section*{Acknowledgements}

This work is part of a project that has received funding from the European Union’s Horizon 2020 
research and innovation programme under the Marie Skłodowska-Curie grant agreement No 860843.

\begingroup
\small 
\bibliographystyle{plain}
\bibliography{references}
\endgroup
\end{document}